\author{Azadeh Nikou and Anthony G. O'Farrell}
\address[A. Nikou]{Department of Mathematics, Tarbiat Moallem University, 599 Taleghani Avenue, 15618 Tehran, Iran}
\address[A. O'Farrell]{Department of Mathematics and Statistics, NUI, May\-n\-ooth, Co. Kildare, Ireland}
\email[A. Nikou]{a\_nikou@tmu.ac.ir}
\email[A. O'Farrell]{admin@maths.nuim.ie}
\keywords{Banach algebra, Function algebra, Shilov boundary,
peak point, vector-valued}
\subjclass[2010]{46H99}
\title[Banach algebras of functions]
{Banach algebras of Vector-valued functions}
\newtheorem{thm}{Theorem}[section]
\newtheorem{cor}[thm]{Corollary}
\newtheorem{lem}[thm]{Lemma}
\newtheorem{defn}{Definition}[section]
\newcommand{\C}{\mathbb{C}}
\newcommand{\N}{\mathbb{N}}
\newcommand{\Lip}{\textup{Lip}}
\newcommand{\lip}{\textup{lip}}
\newcommand{\bdy}{\textup{bdy}}
\def\sideremark#1{\ifvmode\leavevmode\fi\vadjust{%            The remark
\vbox to0pt{\hbox to 0pt{\hskip\hsize\hskip1em%               will appear only
\vbox{\hsize1cm\tiny\raggedright\pretolerance10000%          on the side
\noindent #1\hfill}\hss}\vbox to8pt{\vfil}\vss}}}%           in 3cm
\def\Label#1{\label{#1}{\bf (#1)}~}
\def\Label{\label}
\def\ignore#1{}
\begin{document}

\bibliographystyle{amsplain}

%%% ----------------------------------------------------------------------
\begin{abstract}We introduce the concept
of an $E$-valued function algebra,
a type of  Banach algebra that  consist
of continuous $E$-valued functions on some compact Hausdorff
space, where $E$ is a Banach algebra.
We present some basic results about
such algebras, having to do with
the Shilov boundary and the set of peak
points of some  commutative $E$-valued
function algebras.
We give some specific examples.
\end{abstract}
%%% ----------------------------------------------------------------------
\maketitle

\baselineskip20pt%double spacing

\section{Introduction and Preliminaries}
%---------------------------------------------------------------------

We consider only algebras over the field
of complex numbers, $\mathbb{C}$.
A Banach algebra is an algebra
equipped with a submultiplicative
norm with respect to which it
is complete.  See \cite{dales, zelazko}
for background on Banach algebras.

\subsection{$E$-valued Function Algebras}
Let $X$ be a nonempty compact Hausdorff space, $E$ be a unital
Banach algebra
and $C(X,E)$ be the
space of all continuous maps from $X$ into $E$. We define the
{\em uniform norm }on $C(X,E)$ by
$$\Vert f\Vert_{X}:={\sup}_{x\in X}\Vert f(x)\Vert,\quad\forall f\in C(X,E). $$
For $f, g \in C(X,E)$ and $\lambda\in \mathbb C$, the pointwise
operations $\lambda f $, $f+g$ and $fg$ in $C(X,E)$ are defined as
usual. It is easy to see that $C(X,E)$, equipped with the norm
$\Vert \cdot \Vert_{X}$, is a
Banach algebra. If $E=\mathbb C$ we get the ordinary uniform function algebra
$C(X)
:=C(X,\mathbb C)$
of all continuous complex-valued functions on
$X$. See any of \cite{browder, dales, gamelin, stout} for background on
uniform algebras.

\begin{defn}
By an {\em $E$-valued function algebra on $X$} we mean
a subalgebra $A\subseteq C(X,E)$, equipped with
some norm  that makes it complete, such that (1)
$A$ has as an
element the constant function $x\mapsto 1_E$, (2) $A$
separates points on $X$, i.e.
given distinct points $a,b\in X$, there exists
$f\in A$ such that $f(a)\not=f(b)$, and
(3) the evaluation map
$$
e_x: \left\{
\begin{array}{rcl}
A&\to& E,\\
f&\mapsto&f(x)
\end{array}
\right.
$$
is continuous, for each $x\in X$.
\end{defn}

We remark that, as it stands, condition (3)
is the very weak assumption
that the inclusion map $A\hookrightarrow E^X$
is continuous, where $E^X$ is given the
cartesian product topology, but
it follows from the Closed Graph Theorem
that if $A$ is an $E$-valued function algebra
on $X$, then the inclusion map $A\hookrightarrow
C(X,E)$ is continuous, so there exists some
constant $M>0$ such that
$$ \|f\|_X \le M\|f\|_A,\ \forall f\in A.$$

Normally, we shall use the same
notation $a$ for the element $a\in E$ and the
constant function $x\mapsto a$ on $X$.
%Removed the following:
%The map $a\mapsto (x\mapsto a)$ imbeds 
%$E$ isomorphically as a subalgebra of each
%$E$-valued function algebra $A$, and we
%normally identify $E$ with its image.
%Notice that $A$ is commutative if and only if $E$
%is commutative.

%One could perhaps instead say this:
%If each constant $E$-valued function belongs to 
%the $E$-valued function algebra $A$, then
%we call $A$ an $E$-valued function algebra 
%that has all the constants.  In that case,
%The map $a\mapsto (x\mapsto a)$ imbeds 
%$E$ isomorphically as a subalgebra of $A$,
%and we normally identify $E$ with its image.
%In that case also, $A$ is commutative if and only if $E$
%is commutative.

The classical concept of a function algebra (cf. \cite{gamelin,
dales}) corresponds, in our terminology, to  a ${\mathbb C}$-valued
function algebra. Note however that some authors (e.g.
\cite{browder}) have used the term function algebra to refer only to
{\em closed} subalgebras of $C(X)$. We do not assume that an
$E$-valued function algebra on $X$ is closed in the uniform norm.

An important class of examples is afforded
by taking a compact set $X\subset
{\mathbb C}^n$
and a
commutative unital Banach algebra $E$,
and defining
the algebra $P(X,E)$
to be the uniform
closure of $E[z]|X$ in $C(X,E)$, where
$E[z]=E[z_1,\ldots,z_n]$ is the algebra
of all polynomials in the coordinate functions $z_1,...,z_n$ with
coefficients in $E$.
We can also form
the algebra $R(X,E)$, defined to be the uniform closure on
$X$ of the algebra of functions of the form $p(z)/q(z)$,
where $p(z)\in E[z]$, $q(z)\in E[z]$, and
$q(x)\in E^{-1}$ whenever $x\in X$.

Johnson \cite{Johnson} considered the rather similar
concept of the convolution algebra
$L^1(G,A)$ of $A$-valued Bochner-integrable
functions from a locally-compact abelian group
$G$ into a commutative Banach algebra $A$.
The abstract Fourier transform maps such
an $L^1(G,A)$ isomorphically to an $A$-valued 
algebra of continuous functions on the dual group $\hat G$.

\medskip
There is also work \cite{BMS} on operator-valued
Fourier-Stieltjes algebras, and operator-valued
maps occur in applications such as homotopy theory,
but in this paper we are going to concentrate on 
algebras of functions into commutative algebras $E$.  
Specifically, we shall study boundaries.
We proceed to define the terms.

%----------------------------------------------------------------------

\subsection{Characters}

For a commutative unital Banach algebra $A$, let $M(A)$ denote
the set of all characters (nonzero complex-valued multiplicative
linear functionals) on $A$.  It is well-known that
$M(A)$ is nonempty and that its elements are automatically
continuous, with norm $1$.  Endowed with the
weak-star topology, $M(A)$ becomes a compact
Hausdorff space.  The Gelfand transform of $f\in A$ is the
complex-valued function $\hat{f}$ defined by
$\hat{f}(\varphi)=\varphi(f)$ on $M(A)$.  Let
$\hat{A}=\{\hat{f}: f\in A\}$. The algebra $\hat A$ consists
of $\C$-valued continuous functions on $M(A)$. Hence it is a
$\C$-valued function algebra on $M(A)$ when endowed with
the quotient norm.  However, we shall use the notation
$\|\hat f\|$ to denote the uniform norm of $\hat f$
on $M(A)$, and with respect to this norm $\hat A$ may or
may not be complete.

The kernel of the map
$$ \hat{\ }: \left\{
\begin{array}{rcl}
A &\to& \hat A\\
f&\mapsto&\hat f
\end{array}
\right.
$$
is the Jacobson radical of $A$.  The
characters on $C(X)$ are exactly the evaluations
$e_x:f\mapsto f(x)$, with $x\in X$, and
$X$ is homeomorphic to $M(C(X))$ with its relative weak-star
topology as a subset of the dual $A^*$.

\smallskip
When $A$ is a $\C$-valued function algebra on $X$,
the map $x\mapsto e_x|A$ imbeds $X$ homeomorphically
as a compact subset of $M(A)$.  When this map is surjective,
one calls $A$ a {\em natural} $\C$-valued function
algebra on $X$ \cite{dales}.

The basic example $C(X,E)$ itself was studied
by Hausner, who showed 
that
its maximal ideal space is
homeomorphic to $M(E)\times X$. More precisely,
he showed \cite[Lemma 2]{hausner}:
\begin{lem}[Hausner]\label{L:hausner}
For each commutative Banach algebra $E$
with identity and each compact Hausdorff space
$X$, the map $(\phi,x)\mapsto \phi\circ e_x$ is a
homeomorphism from $M(E)\times X$ onto
$M(C(X,E)$.
\end{lem}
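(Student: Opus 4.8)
The plan is to show that the map $\Psi\colon(\phi,x)\mapsto\phi\circ e_x$ is a continuous bijection of the compact space $M(E)\times X$ onto the Hausdorff space $M(C(X,E))$; since a continuous bijection from a compact space onto a Hausdorff space is a homeomorphism, this will suffice. That $\Psi$ is well defined is immediate: $e_x\colon C(X,E)\to E$ is a unital algebra homomorphism, so $\phi\circ e_x$ is linear and multiplicative and carries the constant function $1_E$ to $1$, hence lies in $M(C(X,E))$. For continuity, by the definition of the weak-star topology it is enough to fix $f\in C(X,E)$ and check that $(\phi,x)\mapsto\phi(f(x))$ is continuous on $M(E)\times X$; this factors as the continuous map $x\mapsto f(x)$ from $X$ to $E$ followed by the evaluation pairing $(\phi,e)\mapsto\phi(e)$, and the latter is jointly continuous on $M(E)\times E$ because $|\phi(e)-\phi_0(e_0)|\le\|e-e_0\|+|\phi(e_0)-\phi_0(e_0)|$, using that every character has norm $\le 1$.

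For injectivity, suppose $\phi_1\circ e_{x_1}=\phi_2\circ e_{x_2}$. Restricting to the constant functions shows $\phi_1=\phi_2=:\phi$. If $x_1\ne x_2$, Urysohn's lemma gives $g\in C(X)$ with $g(x_1)=0$ and $g(x_2)=1$; applying the two functionals to $g\cdot 1_E$ yields $0=1$, a contradiction, so $x_1=x_2$.

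The substantive part is surjectivity. Given $\psi\in M(C(X,E))$, let $\phi:=\psi|_E$ be the restriction to the subalgebra of constant functions, which is a character on $E$. The functional $h\mapsto\psi(h\cdot 1_E)$ is a character on $C(X)$, hence equals evaluation at some $x_0\in X$. It remains to prove $\psi(f)=\phi(f(x_0))$ for every $f\in C(X,E)$. Put $\tilde f(x):=\phi(f(x))$, which lies in $C(X)$, and $g:=f-\tilde f\cdot 1_E$; then $g(x)\in\ker\phi$ for all $x$, and since $\psi(\tilde f\cdot 1_E)=\tilde f(x_0)$ it is enough to show $\psi(g)=0$. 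Choosing a partition of unity $\{h_i\}$ subordinate to a finite open cover of $X$ on whose members $g$ varies by less than $\varepsilon$, one approximates $g$ uniformly by the sums $\sum_i(h_i\cdot 1_E)\,g(x_i)$, where each $g(x_i)\in\ker\phi$; since $\psi$ is continuous in the uniform norm and $\psi\big((h_i\cdot 1_E)\,g(x_i)\big)=\psi(h_i\cdot 1_E)\,\phi(g(x_i))=0$, letting $\varepsilon\to0$ gives $\psi(g)=0$, whence $\psi(f)=\phi(f(x_0))$ and $\psi=\phi\circ e_{x_0}=\Psi(\phi,x_0)$.

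I expect the surjectivity step to be the one place where care is needed: it reduces to showing that $\psi$ annihilates every $E$-valued function all of whose values lie in the hyperplane $\ker\phi$, and the partition-of-unity approximation is exactly the device that brings this back to the trivial case of functions of the form $(h\cdot 1_E)\cdot a$ with $h\in C(X)$ and $a\in E$. With surjectivity established, compactness of $M(E)\times X$ together with the Hausdorff property of $M(C(X,E))$ completes the proof.
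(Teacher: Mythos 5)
Your proof is correct; the paper itself offers no proof of this lemma, simply citing Hausner's original article, and your argument (compact-to-Hausdorff bijection, with surjectivity reduced via a partition of unity to functions of the form $(h\cdot 1_E)a$) is precisely the standard Hausner-style proof that the paper alludes to elsewhere when it speaks of ``a method similar to Hausner's in [Lemma 1]''. No gaps: the joint continuity of the pairing, the Urysohn separation, and the $\ker\phi$-valued reduction are all handled properly.
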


\subsection{Shilov Boundary and Peak Points}
\begin{defn}
A {\em closed boundary }for a commutative
Banach algebra $A$ is a closed subset
$F\subseteq M(A)$ such that for each $a\in A,$
$$\sup_{\varphi\in M(A)}\vert\hat{a}(\varphi)\vert=\sup_{\varphi\in
F}\vert\hat{a}(\varphi)\vert.$$
The {\em Shilov boundary} of $A$ is the intersection
$$  \Gamma(A) = \bigcap\{ F: F\textup{ is a closed boundary for }A\}.$$
\end{defn}
It can be shown \cite[Theorem 15.2]{zelazko} or \cite{stout}
that $\Gamma(A)$ is the unique minimal
closed boundary for $A$.

\begin{defn}
Let $A$ be a unital commutative Banach algebra. A closed subset
$S\subseteq M(A)$ is called a {\em peak set }if there exists an element
$a\in A$ such that $\hat{a}(\varphi)=1$ for $\varphi\in S$ and
$\vert\hat{a}(\psi)\vert<1$ for $\psi\in M(A)\setminus S$. A point
$\varphi\in M(A)$ is a {\em peak point }for $A$ if $\{\varphi\}$ is a peak
set. We write $S_0(A)$ for the set of peak points for $A.$
\end{defn}
Obviously, $S_0(A)\subseteq \Gamma(A)$.  If
$M(A)$ is metrisable, then (cf. \cite[Cor. 4.3.7]{dales})
$\Gamma(A)$ is the closure of $S_0(A)$.

\subsection{Main Result}

Our results are about commutative algebras.

In Section \ref{S:2}  we introduce the concept
of an {\em admissible quadruple} $(X,E,B,\tilde B)$,
which formalises the idea of an $E$-valued
function algebra $\tilde B$ that is organically connected
to a $\C$-valued function algebra $B$ on the same space $X$.
To such a quadruple we associate an injective map
$\pi:M(E)\times X\to M(\tilde B)$,
and we say that the quadruple is
{\em natural} when $\pi$ is bijective.
We prove the following result about the
relation between the three Shilov boundaries
that are in play:

\begin{thm}\label{T:main}
Let $(X,E,B,\tilde{B})$ be a natural admissible
quadruple.
Then the associated map
$\pi$ maps
$\Gamma(E)\times \Gamma(B)$ homeomorphically onto
$\Gamma(\tilde{B})$
\end{thm}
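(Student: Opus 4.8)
The plan is to reduce the assertion to a set equality and then establish that equality by two inclusions. First observe that $\pi$ is a continuous bijection from the compact space $M(E)\times X$ onto the Hausdorff space $M(\tilde B)$, hence a homeomorphism; its restriction to the closed (hence compact) subset $\Gamma(E)\times\Gamma(B)$ is therefore automatically a homeomorphism onto its image, so the theorem will follow once we prove
$$\pi\bigl(\Gamma(E)\times\Gamma(B)\bigr)=\Gamma(\tilde B).$$
Throughout I use the defining property $\widehat f(\pi(\phi,x))=\phi(f(x))$ for $f\in\tilde B$, $\phi\in M(E)$, $x\in X$, together with the structural features built into the notion of an admissible quadruple that are needed here: $B$ and $E$ both embed as subalgebras of $\tilde B$ (so $be\in\tilde B$ whenever $b\in B$, $e\in E$), $\phi\circ f\in B$ for every $f\in\tilde B$ and $\phi\in M(E)$, and $X$ is a closed boundary for $B$ (so $\Gamma(B)\su X$ and $\|b\|_X=\|\widehat b\|_{M(B)}$ for all $b\in B$).

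\emph{The inclusion $\Gamma(\tilde B)\su\pi(\Gamma(E)\times\Gamma(B))$.} Since $\Gamma(E)\times\Gamma(B)$ is compact, its image $S:=\pi(\Gamma(E)\times\Gamma(B))$ is closed in $M(\tilde B)$, so it suffices to check that $S$ is a boundary for $\tilde B$. Fix $f\in\tilde B$. As $\Gamma(E)$ is a boundary for $E$, for each $x\in X$ we have $\sup_{\phi\in M(E)}|\phi(f(x))|=\|\widehat{f(x)}\|_{M(E)}=\sup_{\phi\in\Gamma(E)}|\phi(f(x))|$, whence
$$\|\widehat f\|_{M(\tilde B)}=\sup_{x\in X}\ \sup_{\phi\in M(E)}|\phi(f(x))|=\sup_{\phi\in\Gamma(E)}\ \sup_{x\in X}|\phi(f(x))|=\sup_{\phi\in\Gamma(E)}\|\phi\circ f\|_X.$$
For fixed $\phi\in\Gamma(E)$ the scalar function $\phi\circ f$ lies in $B$, so $\|\phi\circ f\|_X=\|\phi\circ f\|_{\Gamma(B)}$ because both $X$ and $\Gamma(B)$ are boundaries for $B$. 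Substituting gives $\|\widehat f\|_{M(\tilde B)}=\sup_{\phi\in\Gamma(E),\,x\in\Gamma(B)}|\phi(f(x))|=\|\widehat f\|_S$, as required.

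\emph{The inclusion $\pi(\Gamma(E)\times\Gamma(B))\su\Gamma(\tilde B)$.} Here I use the standard localization test: a point $\psi_0\in M(\tilde B)$ lies in $\Gamma(\tilde B)$ provided that every open $W\ni\psi_0$ carries some $h\in\tilde B$ with $\|\widehat h\|_{M(\tilde B)}>\|\widehat h\|_{M(\tilde B)\setminus W}$ (for then a closed boundary avoiding $W$ would fail to be a boundary, so every closed boundary meets $W$, and hence contains $\psi_0$). Let $(\phi_0,x_0)\in\Gamma(E)\times\Gamma(B)$, put $\psi_0=\pi(\phi_0,x_0)$, and let $W\ni\psi_0$ be open. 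Since $\pi$ is a homeomorphism, $\pi^{-1}(W)$ contains a basic open box $U\times V$ with $\phi_0\in U\su M(E)$ and $x_0\in V\su X$. Applying the same localization property to $\phi_0\in\Gamma(E)$ and to $x_0\in\Gamma(B)$ (passing from $M(B)$ to $X$ via the fact that $X$ is a boundary for $B$) yields $e_0\in E$ and $b_0\in B$ with $\|\widehat{e_0}\|_{M(E)}=1$, $\sup_{M(E)\setminus U}|\widehat{e_0}|=:\alpha<1$, $\|b_0\|_X=1$, and $\sup_{X\setminus V}|b_0|=:\beta<1$. Put $h=b_0e_0\in\tilde B$. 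Then $\widehat h(\pi(\phi,x))=b_0(x)\,\widehat{e_0}(\phi)$, so $\|\widehat h\|_{M(\tilde B)}=1$, while on $M(\tilde B)\setminus W\su\pi\bigl((M(E)\setminus U)\times X\bigr)\cup\pi\bigl(M(E)\times(X\setminus V)\bigr)$ one has $|\widehat h|\le\max(\alpha,\beta)<1$. Hence $\psi_0\in\Gamma(\tilde B)$, proving the inclusion.

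The two inclusions yield the set equality, and with the opening observation the theorem follows. The only step that genuinely exploits the structure of the situation is the second inclusion: one must manufacture, near an arbitrary prescribed point of $\Gamma(E)\times\Gamma(B)$, a \emph{single} element of $\tilde B$ whose Gelfand transform localizes its maximum there, and this is precisely what the presence of the products $be$ ($b\in B$, $e\in E$) in $\tilde B$ makes possible — the factorization $\widehat h(\pi(\phi,x))=b_0(x)\widehat{e_0}(\phi)$ decouples the two variables, so that a function peaking for $E$ near $\phi_0$ and one peaking for $B$ near $x_0$ can simply be multiplied. The first inclusion, by contrast, is merely an interchange of suprema, resting on the admissibility hypothesis $\phi\circ f\in B$.
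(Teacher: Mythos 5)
Your proof is correct and follows essentially the same route as the paper's: the first inclusion is the same interchange of suprema (the paper phrases it via attained maxima on the compact boundaries), and the second uses the identical construction of a product $b_0e_0$ that localizes its maximum modulus near the prescribed point, invoking the standard local-peaking characterization of the Shilov boundary (the paper cites Zelazko, Theorem 15.3, where you prove the criterion inline).
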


We give some specific examples, and other results.

\section{Admissible Quadruples}\Label{S:2}

\begin{defn}
By an {\em admissible quadruple} we mean a quadruple
$(X,E,B,\tilde{B})$, where
\begin{enumerate}
\item
$X$ is a compact Hausdorff space,
\item $E$ is a commutative Banach algebra with unit,
\item $B\subseteq C(X)$ is a natural $\C$-valued function
algebra on $X$,
\item $\tilde{B}\subseteq C(X,E)$ is an $E$-valued function
algebra on $X$,
\item $B\cdot E \subseteq \tilde{B}$, and
\item\label{last}$\{\lambda\circ f, f\in \tilde{B}, \lambda\in M(E)\}\subseteq B.$
\end{enumerate}
\end{defn}

We remark that if we assume that the linear span
of $B\cdot E$ is dense in $\tilde B$,
then (\ref{last}) is automatically true.

Condition (\ref{last}) is undemanding if the Jacobson
radical $J(E)$ of $E$ is large.  In fact, we are mainly interested
in semisimple algebras. The meat of Theorem \ref{T:main}
is really about the quotient $E/J(E)$.

%---------------------------------------------------------------------------------
Given an admissible quadruple $(X,E,B,\tilde{B})$,
we define the {\em associated map}
$$\pi:\left\{
\begin{array}{rcl}
M(E)\times X &\to& M(\tilde{B})\\
(\psi,x) &\mapsto& \psi\circ e_x.
\end{array}
\right.
$$
\begin{lem}
Let $(X,E,B,\tilde{B})$ be an admissible quadruple.
Then the associated map $\pi$ is a continuous injection.
\end{lem}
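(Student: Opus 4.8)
The plan is to check, in order, that $\pi$ takes values in $M(\tilde B)$, that it is continuous, and that it is injective; the only substantive point is the continuity, and the injectivity is where axioms (3) and (5) of an admissible quadruple are used. Note first that since $E$ is commutative, so is $\tilde B$, and $\tilde B$ is unital (it contains the constant function $1_E$), so $M(\tilde B)$ makes sense. For well-definedness: given $(\psi,x)\in M(E)\times X$, the evaluation $e_x\colon\tilde B\to E$ is an algebra homomorphism, continuous by axiom (3) in the definition of an $E$-valued function algebra, and $\psi\colon E\to\C$ is a character; hence $\psi\circ e_x$ is a multiplicative linear functional on $\tilde B$, and it is nonzero because it sends $1_E\in\tilde B$ to $\psi(1_E)=1$. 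Thus $\psi\circ e_x\in M(\tilde B)$.

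For continuity I would use that $M(E)\times X$ carries the product topology and $M(\tilde B)$ the weak-$*$ topology, so it suffices to show that whenever $(\psi_\alpha,x_\alpha)\to(\psi,x)$ one has $\psi_\alpha(f(x_\alpha))\to\psi(f(x))$ for every $f\in\tilde B$. This is the step I expect to be the main obstacle, because one cannot simply invoke separate continuity in the two variables — both $\psi$ and $x$ vary. The trick is to split
$$
\psi_\alpha(f(x_\alpha))-\psi(f(x))=\psi_\alpha\bigl(f(x_\alpha)-f(x)\bigr)+\bigl(\psi_\alpha(f(x))-\psi(f(x))\bigr).
$$
The second term tends to $0$ by the weak-$*$ convergence $\psi_\alpha\to\psi$ applied to the \emph{fixed} vector $f(x)\in E$. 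For the first term, every character on $E$ has norm $1$, so $|\psi_\alpha(f(x_\alpha)-f(x))|\le\|f(x_\alpha)-f(x)\|$, which tends to $0$ because $f\colon X\to E$ is continuous and $x_\alpha\to x$. Hence $\pi$ is continuous. (As a remark, $M(E)\times X$ is compact and $M(\tilde B)$ is Hausdorff, so $\pi$ is automatically a homeomorphism onto its image; what is missing in general is surjectivity, which is precisely the ``natural'' hypothesis invoked in Theorem~\ref{T:main}.)

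Finally, for injectivity, suppose $\psi_1\circ e_{x_1}=\psi_2\circ e_{x_2}$ as functionals on $\tilde B$. Since $1\in B$ and $B\cdot E\subseteq\tilde B$, for each $b\in B$ the function $x\mapsto b(x)1_E$ lies in $\tilde B$; evaluating the two equal functionals on it gives $b(x_1)\psi_1(1_E)=b(x_2)\psi_2(1_E)$, i.e. $b(x_1)=b(x_2)$, for all $b\in B$. As $B$ separates the points of $X$, this forces $x_1=x_2=:x$. Then for each $e\in E$ the constant function $e=1\cdot e$ lies in $\tilde B$, and evaluating $\psi_1\circ e_x=\psi_2\circ e_x$ on it gives $\psi_1(e)=\psi_2(e)$; hence $\psi_1=\psi_2$. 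Therefore $\pi$ is injective, completing the proof.
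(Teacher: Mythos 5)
Your proof is correct. The injectivity argument is essentially the paper's: the authors dispose of it in one line by saying that $\hat E$ separates points of $M(E)$ and $\hat B$ separates points of $X$, which is exactly what you implement concretely via the functions $b\cdot 1_E$ and the constants $1\cdot e$ supplied by condition (5). Where you genuinely diverge is the continuity step. The paper factors $\pi$ as the composition of Hausner's homeomorphism $M(E)\times X\to M(C(X,E))$ with the weak-star continuous restriction map $C(X,E)^*\to\tilde B^*$ (the adjoint of the inclusion, whose boundedness comes from the Closed Graph Theorem remark in Section 1). You instead give a direct net argument, splitting $\psi_\alpha(f(x_\alpha))-\psi(f(x))$ into a term controlled by the uniform bound $\|\psi_\alpha\|\le 1$ on characters together with continuity of $f\colon X\to E$, and a term controlled by weak-star convergence of $\psi_\alpha$. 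Your route is more elementary and self-contained: it needs neither Hausner's lemma nor the boundedness of the inclusion $\tilde B\hookrightarrow C(X,E)$, only the pointwise facts that each $f\in\tilde B$ is a continuous $E$-valued function and that characters are contractive. The paper's route is shorter given that Hausner's Lemma \ref{L:hausner} is already quoted and reused elsewhere. Your correct observation that you cannot argue by separate continuity, and your handling of it via the equicontinuity of the family $\{\psi_\alpha\}$, is precisely the point that a naive argument would miss.
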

\begin{proof}
$\pi$ is injective from $M(E)\times X$
into $M(\tilde B)$, since $\hat E$ separates points on
$M(E)$ and $\hat B$ separates points on $X$.
To see that $\pi$ is continuous, observe
that it is the composition of the (weak-star continuous)
restriction map $C(X,E)^*\to
(\tilde B)^*$ with Hausner's homeomorphism
$M(E)\times X\to M(C(X,E))$.
\end{proof}

\begin{cor}
Let $(X,E,B,\tilde{B})$ be an admissible
quadruple. Then the following are equivalent:
\\
(1)
The associated map $\pi$ is surjective.\\
(2)
The associated map $\pi$ is bijective.\\
(3)
The associated map
$\pi$ is  a
homeomorphism of
$M(E)\times X$ onto
$M(\tilde{B})$
\end{cor}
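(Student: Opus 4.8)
The plan is to lean entirely on the lemma just proved, which already establishes that $\pi$ is a continuous injection; the only thing left to organise is how surjectivity upgrades to the full homeomorphism statement. First I would dispose of $(1)\Leftrightarrow(2)$ essentially for free: since $\pi$ is injective by the lemma, it is bijective exactly when it is surjective. The implication $(3)\Rightarrow(1)$ is immediate, since a homeomorphism onto $M(\tilde B)$ is in particular surjective. So the entire content of the corollary is concentrated in $(2)\Rightarrow(3)$.

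For that implication I would invoke the standard topological fact that a continuous bijection from a compact space onto a Hausdorff space is a homeomorphism. The domain $M(E)\times X$ is compact, being the product of two compact spaces: $M(E)$ is compact in its weak-star topology (as recorded in the preliminaries) and $X$ is compact by hypothesis (1) of an admissible quadruple. The codomain $M(\tilde B)$ is Hausdorff, being a character space with its weak-star topology. By the lemma together with the assumption in $(2)$, $\pi$ is a continuous bijection; hence it is a closed map (the image of a closed, therefore compact, subset of $M(E)\times X$ is compact, hence closed in the Hausdorff space $M(\tilde B)$), and a closed continuous bijection has continuous inverse. Thus $\pi$ is a homeomorphism, which is $(3)$.

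I do not anticipate any genuine obstacle here: once the equivalences are arranged so that only $(2)\Rightarrow(3)$ requires argument, the proof is a single application of compactness, and every hypothesis it uses — compactness of $M(E)$ and of $X$, Hausdorffness of $M(\tilde B)$, and the continuity and injectivity of $\pi$ — is already available from the preliminaries and the preceding lemma.
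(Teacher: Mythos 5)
Your proof is correct and is exactly the argument the paper intends: the corollary is stated immediately after the lemma that $\pi$ is a continuous injection, and the implicit justification is the standard fact that a continuous bijection from the compact space $M(E)\times X$ onto the Hausdorff space $M(\tilde B)$ is a homeomorphism. Nothing further is needed.
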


\begin{defn}
We say that an admissible quadruple $(X,E,B,\tilde{B})$
is {\em natural} if the associated map $\pi$
is bijective.
\end{defn}

For instance, if $B$ is a natural $\C$-valued function algebra on $X$,
then $(X,\C,B,B)$ is a natural admissible quadruple,
so this terminology is a reasonable extension
of the usual use of \lq natural'.
Further, if $(X,E,B,\tilde B)$
is an admissible quadruple and $E$ is semisimple, then $\hat E$ (with
the induced norm given by $\|\hat h\|=\|h\|_E$) is
a natural $\C$-valued function algebra on $M(E)$,  so $\tilde B$
is isometrically isomorphic to a $\C$-valued function algebra on
$M(E)\times X$, and it is a natural $\C$-valued
function algebra if and only if the quadruple is natural.

Tomiyama \cite{Tom} showed that if
$A$ and $B$ are commutative
Banach algebras with identity, and some completion
of $C$ of $A\otimes B$ is also a Banach algebra,
then the natural map $M(A)\times M(B) \to M(C)$ 
is a homeomorphism.  Thus if $(X,E,B,\tilde B)$
is an admissible quadruple, and the linear span of $B\cdot E$
is dense in $\tilde B$, we may apply
Tomiyama's theorem with $A=E$ and $B=B$ and deduce that
the quadruple is natural.

In view of the corollary, when given
a natural admissible quadruple
$(X,E,B,\tilde{B})$, we often
identify $M(E)\times X$ with $M(\tilde B)$.

\medskip
\begin{proof}[Proof of Theorem \ref{T:main}]
First, we show that the image of $\pi$ is a boundary for $\tilde B$:
Let $f\in \tilde{B}$.  Fix a character $\phi\in M({\tilde{B}})$.
Then $\phi$ is of
the form $\psi\circ e_x$ for some $x\in X$ and some
$\psi\in M(E)$, and then $\hat f(\phi)=(\psi\circ f)(x)$.
Now $\psi\circ f\in B$, so
there exists
a point $y\in \Gamma(B)$ such that
$
\vert(\psi\circ f)(x)\vert
\le \vert(\psi\circ f)(y)\vert$.
Next, $(\psi\circ f)(y)= \widehat{f(y)}(\psi)$, and $f(y)\in E$, so
there exists a point $\chi\in \Gamma(E)$
such that $|\widehat{f(y)}(\psi)|\le |\widehat{f(y)}(\chi)|$.
Thus
$$ |\hat f(\phi)| \le |\hat f(\pi(\chi,y))|.$$

This shows that for each $f\in\tilde B$,
$\hat f$ attains its maximum
modulus on the image of $\pi$, so that image is a boundary, and
$$\Gamma(\tilde B)
\subseteq \pi(\Gamma(E)\times \Gamma(B)).$$

\noindent
To see the opposite inclusion,
fix $x\in \Gamma(B)$ and $\psi\in \Gamma(E)$. Let
$U$
 be any neighborhood of $x$  in $X$ and $V$ be any
 neighborhood of $\psi$ in $M(E).$
There exists $f\in B$ such that $\|\hat{f}\|=1$ and $\vert
f(y)\vert<1$ for all $y\in X\backslash U$. In addition there
exists $v\in E$ such that $\|\hat{v}\|=1$ and $\vert\phi(v)\vert<1$
for all $\phi\in M(E)\setminus V$. Now define $g:X\to E$
by $g=vf$. We have $g\in \tilde{B}$ and
$$\begin{array}{rcl}
\|\hat{g}\| &=& \sup_{\phi\in M(E)}\sup_{y\in X}
\vert\widehat{vf}(\phi\circ e_y)\vert\\
&=&\sup_{\phi\in M(E)}\sup_{y\in X}\vert f(y)\phi(v)\vert\\
&=&\sup_{\phi\in M(E)}
\vert\phi(v)\vert
\cdot\sup_{y\in X}
\vert f(y)\vert
\\
&=&
\|\hat{v}\|.\|\hat{f}\|= 1.
\end{array}$$
On the other hand
every $\varphi'\in\pi(M(E)\times X\setminus (U\times
V))$ is of the form $\varphi^{'}=\phi\circ e_{y}$
with $y\in X\setminus U$ or $\phi\in
M(E)\setminus V$ (or both). Therefore,
$$\vert\hat{g}(\varphi^{'})\vert=
\vert\varphi^{'}(vf)\vert
=
\vert\phi(v)f(y)\vert<1.$$
Since $U$ and $V$ were
arbitrary neighbourhoods, it follows from \cite[Theorem 15.3]{zelazko}
that
$\psi\circ e_x\in\Gamma(\tilde{B})$. Therefore $\pi(\Gamma(B)\times
\Gamma(E))\subseteq\Gamma(\tilde{B})$ and so the proof is complete.
\end{proof}
%--------------------------------------------------------------------------

\subsection{Examples}

$\mathbf{(i)}$ Let $X$ be a compact Hausdorff space and $E$ be a
unital commutative Banach algebra. Then $(X,E, C(X), C(X,E))$ is an
admissible quadruple.  It is natural by Lemma \ref{L:hausner},
and this case of Theorem \ref{T:main} is Hausner's theorem
\cite{hausner} that the Shilov boundary of $C(X,E)$ is equal to the
cartesian product $X\times \Gamma(E)$.

%--------------------------------------------------------------------------------
$\mathbf{(ii)}$ Let  $(X,d)$ be a compact metric space
and $E$ be a commutative unital Banach algebra.
For a constant $0<\alpha\leq 1$ and a
function $f:X\rightarrow E$, the Lipschitz constant of $f$ is
defined as
$$p_{\alpha}(f):=\sup_{
\genfrac{}{}{0pt}{1}%
{x,y \in X}%
{x\neq y} } \frac{\| f(x)-f(y)\|}{d(x,y)^{\alpha}},$$ and the
$E$-valued big Lipschitz algebra or simply $E$-valued Lipschitz
algebra (of order $\alpha$) is defined by
$$\Lip^{\alpha}(X,E)=\left\{f:X\rightarrow E :\\
 p_{\alpha}(f)<\infty\right\}.$$
Similarly, for $0<\alpha<1$, the $E$-valued little Lipschitz
algebra (of order $\alpha$) is defined by
$$\lip^{\alpha}(X,E)=\\
\left\{f\in \Lip^{\alpha}(X,E) :\frac{\| f(x)-f(y)\|}{d(x,y)^{\alpha}}\rightarrow
0\textup{ as }d(x,y)\rightarrow 0 \right\}.$$
For each $f\in
\Lip^{\alpha}(X,E)$ we define a norm by
$$\Vert f\Vert_{\alpha} = \Vert f\Vert_{X} + p_{\alpha}(f).$$

%----------------------------------------------------------------------------
In \cite{CZX} it was shown that $(\Lip^{\alpha}(X,E),\Vert \cdot
\Vert_{\alpha})$ is a Banach algebra
having $\lip^{\alpha}(X,E)$ as a closed subalgebra.
%--------------------------------------------------------------------------
It is relatively straightforward to check that $(X,E,
\Lip^\alpha(X,\C), \Lip^\alpha(X,E))$ is an admissible quadruple,
for each $\alpha\in(0,1]$, and that $(X,E, \lip^\alpha(X,\C),
\lip^\alpha(X,E))$ is an admissible quadruple, for each
$\alpha\in(0,1)$. (The result that the maximal ideal space of
$\Lip^{\alpha}(X)$ is $X$ is due originally to Sherbert
\cite{sherbert, dales}.)

The scalar-valued Lipschitz algebras are normal, because if $F$ and
$K$ are disjoint nonempty closed subsets of $X$, then the function
$\displaystyle f:x\mapsto \frac{d(x,F)}{d(x,F)+d(x,K)}$ belongs to
$\Lip^1(X)$. It follows \cite[p. 413]{dales} that they have
partitions of unity subordinate to any open covering. Applying
partitions of unity and a method similar to Hausner's in \cite[Lemma
1]{hausner}, one can see that  each of these $E$-valued  Lipschitz
algebras (and little Lipschitz algebras) is dense in $C(X,E)$. Then,
given a character  $\phi$  on $\Lip^{\alpha}(X,E)$ and a function
$f\in C(X,E)$, we may choose a sequence $(f_n)\in \Lip^\alpha(X,E)$
such that $\|f-f_n\|_X\to0$.  Since
$$ \|\hat h\|_{M(\Lip^\alpha(X,E))}\le \|h\|_X$$
for each $h\in \Lip^\alpha(X,E)$, the sequence
$(\phi(f_n))$ is Cauchy, so we may define
$\tilde{\phi}(f)=\lim_n\phi(f_n)$.  Clearly
$\tilde\phi(f)$ does not depend on the choice
of $(f_n)$, and $\tilde\phi$ is a well-defined
character on $C(X,E)$, extending $\phi$.  Thus,
by Lemma \ref{L:hausner},
$\phi=\psi\circ e_x$ for some $\psi\in M(E)$ and some $x\in X$.
A similar argument works for $\lip^\alpha(X,E)$.
Thus Theorem \ref{T:main} applies, and
the Shilov boundary of $\Lip^{\alpha}(X,E)$(or
$\lip^{\alpha}(X,E))$ is equal to the cartesian product $X\times
\Gamma(E)$ in the product topology.

%--------------------------------------------------------------------------
$\mathbf{(iii)}$ Let $X$ be a compact set in $\mathbb C^n$ and $E$
be a unital commutative Banach algebra, and consider the algebra
$P(X,E)$.   The algebra $P(X)=P(X,\C)$ has character space naturally
identified with $\hat X$, the polynomially-convex hull of $X$
\cite{browder, gamelin, leibowitz, stout}, and $P(X,E)$ may be
regarded as an $E$-valued function algebra on $\hat X$. Using this,
it is easy to see that $(\hat X, E, P(X), P(X,E))$ is an admissible
quadruple: In fact, each $f\in P(X,E)$ is the limit in norm of a
sequence $\{g_n\}$ with each $g_n$ of the form
$\sum_{j=1}^{m_n}a_jp_j$ where $m_n\in\N$, $a_j\in E$ and $p_j\in
P(X)$ depend on $n$. Then by a method similar to \cite[Proposition
1.5.6]{kaniuth} one sees that $P(X,E)=P(X)\check{\otimes} E$. Thus,
since $P(X)=P(\hat X)$ \cite[Chapter II, Theorem 1.4]{gamelin}, we
have
$$ P(X,E)=P(X)\check{\otimes} E=P(\hat X)\check{\otimes} E=P(\hat X,E).$$
We note that in the particular case when $X$ is a
compact plane set, $\hat X$
is obtained by \lq\lq filling in the holes" in
$X$, and $\Gamma(P(X))$ is the topological boundary
of $\hat X$ in $\C$.  In higher dimensions, the Shilov
boundary $\Gamma(P(X))$ is some closed subset of
$\bdy(X)$.

By a method
similar to \cite[Lemma 2]{hausner}, one sees that every
character $\phi$ on $P(X,E)$ is of the form $\phi=\psi\circ e_x$
for some $x\in \widehat{X}$ and some $\psi\in M(E)$. Therefore the
theorem applies, and
the Shilov boundary of $P(X,E)$ is equal to the cartesian product
$\Gamma(E)\times\Gamma(P(X))$

%----------------------------------------------------------------
$\mathbf{(iv)}$ Let $X\subset\mathbb{C}$ be compact, $E$ be a
commutative unital Banach algebra and $E^*$ be the dual space of
$E.$ The algebra of $E$-valued analytic functions is defined as
follows,
$$A(X,E)=\{f\in C(X,E) : \Lambda \circ f \in A(X), \Lambda\in E^*\},$$
where $A(X)$ is the algebra of all continuous functions on $X$ into
$\mathbb C$ which are holomorphic on the interior of $X$. It is clear
that $A(X,E)$ is a closed subalgebra of $(C(X,E),\|\cdot\|_X)$.
Arens showed \cite{gamelin} that $M(A(X))$
is naturally identified with $X$, and so one sees at once
that $(X,E, A(X), A(X,E))$ is an admissible quadruple.
By a
method similar to the one given in \cite[Theorem 2]{character}, we
can deduce that when $E$ is a
unital Banach algebra then every character $\phi$ on $A(X,E)$ is of
the form $\phi=\psi\circ e_x$ for some $x\in X$ and some
$\psi\in M(E)$. So the Shilov boundary of $A(X,E)$ is equal to the
cartesian product $\Gamma(A(X))\times \Gamma(E)$ in the product
topology, by Theorem \ref{T:main}.

$\mathbf{(v)}$ Theorem \ref{T:main} also applies to the algebra $R(X,E)$,
for any commutative unital Banach algebra $E$.
The characters on $R(X)$ are the evaluations at the points
of the rationally-convex hull $\check X$ of $X$, which is the set
of points $a\in\C^n$ such that each polynomial $p(z)\in\C[z]$
that vanishes at $a$ also vanishes at some point of $X$.
In dimension $n=1$, $\check X=X$, but in higher dimensions
it may be a larger set.  So $R(X)$ is a natural
$\C$-valued function algebra on $\check X$.
 
We claim that every character $\phi$ on $R(X,E)$
is of the form $\phi=\psi\circ e_x$, for some
$\psi\in M(E)$ and some $x\in\check X$.

To see this, let $\phi\in M(R(X,E))$. The restriction of $\phi$ to
$P(X,E)$ is a character, so there exists $x_0\in \hat{X}$ and
$\psi\in M(E)$ such that $\phi=\psi\circ e_{x_0}$ on $P(X,E)$.
Given $g=p/q$ where $p,q\in E[z]$ and $q(x)\in E^{-1}$ for each
$x\in X$, we get $p=gq$, $\phi(p)=\phi(g)\phi(q)$, and hence (since
$\phi(q) = \psi(q(x_0))\not=0$)
$$ \phi(g) = \frac{\psi(p(x_0))}{\psi(q(x_0))} = \psi(g(x_0)).$$
Thus, by continuity, $\phi=\psi\circ e_{x_0}$ on
all $R(X,E)$.  Since $R(X)1\subseteq R(X,E)$, it follows that
$x_0\in\check X$ (cf. \cite[Theorem 5, p. 86]{gamelin}.
Thus the claim holds.

Hence if $X$ is rationally-convex, then
$(X, E, R(X), R(X,E))$ is
a natural admissible quadruple.

There seems no reason to suppose that $R(X,E)=R(\check X,E)$
for general $X$, except when $E$
is a uniform algebra.  In general, one sees readily that
there is a contractive algebra homomorphism
$$ R(X,E) \to C( \check X, C(M(E))),$$
and that if $E$ is a uniform algebra then
this gives an isometric isomorphism from
$R(X,E)$ onto $R(\check X,E)$. We do not, however, know 
an example in which the restriction map 
$R(\check X,E) \to R(X,E)$ is not onto.

$\mathbf{(vi)}$ The bidisk algebra \cite{browder} may 
(in view of Hartog's theorem) be regarded
as $\tilde B=A(X,A(X))$, where $X$ is the closed unit disk in $\C$.
The quadruple $(X,A(X),$$A(X),$$\tilde B)$ is admissible, the theorem
applies, and reduces to the classical fact that the Shilov
boundary of $\tilde B$ is the torus. More generally, one gets the
(known) result that the Shilov boundary of $A(X,A(Y))$
is $\bdy X\times \bdy Y$ whenever $X\subset \C$
and $Y\subset\C$ are compact.

$\mathbf{(vii)}$ Let $0<\alpha<1$. The subalgebra of 
$\lip^{\alpha}(X,E)$ which is the closure of $E[z]|X$ in
$\Lip^{\alpha}(X,E)$ norm, where $X\subset
\mathbb C^n$, is denoted by $\Lip^{\alpha}_P(X,E)$. 
It is easy to see that
$\Lip^{\alpha}_P(X,E)$ is dense in
$P(X,E).$ Now by \cite[p.15]{honary}, $M(\Lip^{\alpha}_P(X,\C))$
$=\hat{X}$. 
Thus if $X$ is
polynomially-convex, then the quadruple
$(X,E,\Lip^{\alpha}_P(X,\C),
\Lip^{\alpha}_P(X,E))$
is admissible and natural.

$\mathbf{(viii)}$ Also for $0<\alpha<1$, the subalgebra of
$\lip^{\alpha}(X,E)$ which is the closure of the algebra of
functions of the form $p(z)/q(z)$ in $\Lip^{\alpha}(X,E)$
 where $X\subset \mathbb C^n$, $p(z)\in E[z]$,
$q(z)\in E[z]$, and $q(x)\in E^{-1}$ whenever $x\in X$, is denoted
by $\Lip^{\alpha}_R(X,E)$. It is
easy to see that $\Lip^{\alpha}_R(X,E)$
 is dense in $R(X,E)$ Now by
\cite[p.15]{honary}, $M(\Lip^{\alpha}_R(X,\C))$
$=\check{X}$.  Thus if $X$ is rationally-convex, then
the quadruple
$(X,E,\Lip^{\alpha}_R(X,\C),
\Lip^{\alpha}_R(X,E))$
is admissible and natural.
%---------------------------------------------------------------------------
\subsection{}
By similar arguments, one obtains the following:
%--------------------------------------------------------------------------
\begin{thm}\label{peak Bx}
Let $(X,E,B,\tilde{B})$ be a natural admissible
quadruple.
 Then
the set of peak points of $\tilde{B}$ is equal to the cartesian product
$S_0(B)\times S_0(E)$ in the product topology, that is,
$$S_0(\tilde{B})=S_0(B)\times S_0(E).$$
\end{thm}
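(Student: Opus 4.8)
The plan is to mirror the two-inclusion structure of the proof of Theorem~\ref{T:main}, replacing the maximum-modulus/boundary characterisation with the peaking-function characterisation. Throughout I identify $M(\tilde B)$ with $M(E)\times X$ via $\pi$, so that for $f\in\tilde B$, $\psi\in M(E)$ and $x\in X$ one has $\hat f(\psi,x)=(\psi\circ f)(x)=\widehat{f(x)}(\psi)$.

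First I would show $S_0(B)\times S_0(E)\subseteq S_0(\tilde B)$. Suppose $x_0$ is a peak point for $B$, witnessed by $f\in B$ with $f(x_0)=1$ and $|f(x)|<1$ for $x\neq x_0$, and suppose $\psi_0$ is a peak point for $E$, witnessed by $v\in E$ with $\hat v(\psi_0)=1$ and $|\hat v(\psi)|<1$ for $\psi\neq\psi_0$. Set $g=vf\in B\cdot E\subseteq\tilde B$. Then $\hat g(\psi,x)=\hat v(\psi)f(x)$, so $\hat g(\psi_0,x_0)=1$, while for $(\psi,x)\neq(\psi_0,x_0)$ at least one factor has modulus strictly less than $1$ and the other has modulus at most $1$; hence $|\hat g(\psi,x)|<1$. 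Thus $(\psi_0,x_0)$ is a peak point for $\tilde B$. This step is essentially the same computation as in the ``opposite inclusion'' part of the proof of Theorem~\ref{T:main}, only with strict inequalities holding at single points rather than on complements of neighbourhoods, so it should go through cleanly.

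The reverse inclusion $S_0(\tilde B)\subseteq S_0(B)\times S_0(E)$ is where I expect the real work to be. Let $(\psi_0,x_0)$ be a peak point for $\tilde B$, witnessed by $g\in\tilde B$ with $\hat g(\psi_0,x_0)=1$ and $|\hat g(\psi,x)|<1$ elsewhere. To see that $x_0\in S_0(B)$, consider $h:=\psi_0\circ g$, which lies in $B$ by admissibility condition~(\ref{last}). Then $h(x_0)=\hat g(\psi_0,x_0)=1$, and for $x\neq x_0$ we have $h(x)=\hat g(\psi_0,x)$, which has modulus $<1$; so $h$ peaks at $x_0$ and $x_0\in S_0(B)$. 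To see that $\psi_0\in S_0(E)$, evaluate at $x_0$: the element $g(x_0)\in E$ satisfies $\widehat{g(x_0)}(\psi_0)=\hat g(\psi_0,x_0)=1$ and $\widehat{g(x_0)}(\psi)=\hat g(\psi,x_0)$, which has modulus $<1$ for $\psi\neq\psi_0$; hence $g(x_0)$ peaks at $\psi_0$ in $E$ and $\psi_0\in S_0(E)$. The main obstacle, such as it is, lies in making sure each of these ``restricted'' peaking functions genuinely lives in the right algebra with the right normalisation --- for the $B$-component one leans on condition~(\ref{last}) that $\psi_0\circ g\in B$, and for the $E$-component one uses that $g(x_0)\in E$ is a bona fide element of $E$ since $\tilde B\subseteq C(X,E)$. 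No extra hypotheses beyond admissibility and naturality (the latter to have $\pi$ available as a homeomorphism, so that these single points are genuinely points of $M(\tilde B)$ and the product topology on $M(E)\times X$ matches the weak-star topology) are needed. Combining the two inclusions gives $S_0(\tilde B)=S_0(B)\times S_0(E)$.
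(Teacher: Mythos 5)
Your proof is correct, and it is essentially the argument the paper intends: the paper gives no explicit proof of this theorem, stating only that it follows \lq\lq by similar arguments'' to Theorem \ref{T:main}. Your two inclusions are exactly the peak-point analogues of the two halves of that proof --- the product $g=vf$ built from the separate peaking functions for one direction, and the slices $\psi_0\circ g\in B$ (via admissibility condition (6)) and $g(x_0)\in E$ for the other --- with naturality correctly invoked so that every character of $\tilde B$ has the form $\psi\circ e_x$.
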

%-----------------------------------------------------------------------
\subsection{Acknowledgments}
This paper was conceived while the first-named author was visiting
National University of Ireland, Maynooth, whose hospitality is
acknowledged with thanks.

The authors are grateful to the referee, whose observations and suggestions
greatly improved the paper.
%------------------------------------------------------------------------

\end{document}